\documentclass[12pt]{amsproc}

\usepackage{amsmath, amsthm, amssymb, amsfonts, mathrsfs, amscd, tikz}
\usepackage{enumerate}
\usetikzlibrary{matrix,arrows}

\def\AA{{\mathbb A}}

\def\QQ{{\mathbb Q}}
\def\PP{{\mathbb P}}
\def\QQ{{\mathbb Q}}

\def\ZZ{{\mathbb Z}}

\def\0{{\mathbf 0}}
\def\1{{\mathbf 1}}

\def\Ical{{\mathcal I}}

\def\Mcal{{\mathcal M}}

\def\Ocal{{\mathcal O}}
\def\Pcal{{\mathcal P}}

\def\Vcal{{\mathcal V}}
\def\Wcal{{\mathcal W}}

\def\mfrak{{\mathfrak m}}

\def\Kbar{{\bar K}}

\def\Gal{\mathrm{Gal}}

\def\PGL{\mathrm{PGL}}
\def\GL{\mathrm{GL}}

\def\PrePer{\mathrm{PrePer}}
\def\Per{\mathrm{Per}}

\def\Fix{\mathrm{Fix}}

\def\Res{\mathrm{Res}}
\def\Hom{\mathrm{Hom}^n_d}

\def\uf{\mathrm{uf}}
\def\uc{\mathrm{uc}}

\def\Ratfd2{\mathrm{Rat}_{d,2}^{\uf}}
\def\Mfd2{\mathcal{M}_{d,2}^{\uf}}
\def\Rfd2{\mathcal{R}_{d,2}^{\uf}}
\def\Rcd2{\mathcal{R}_{d,2}^{\uc}}

\def\Frac{\mathrm{Frac}}
\def\Tw{\mathrm{Twist}}

\theoremstyle{plain}

\newtheorem{thm}{Theorem}

\newtheorem{prop}[thm]{Proposition}
\newtheorem{lem}[thm]{Lemma}

\theoremstyle{definition}
\newtheorem*{dfn}{Definition}

\newtheorem{rem}{Remark}

\begin{document}

\title[Twists of rational morphisms]{A dynamical Shafarevich theorem for twists of rational morphisms}

\author{Brian Justin Stout}

\address{Brian Justin Stout; Department of Mathemaics; U.S. Naval Academy; Chauvenet Hall; Annapolis, MD. 21401-1363 U.S.A.}

\email{bstout@gc.cuny.edu}

\thanks{{\em Date of last revision:} 22 August 2013}
\subjclass[2010]{Primary: 37P45; Secondary: 14G25, 37P15}
\keywords{Arithmetic dynamics, twists of rational morphisms, good reduction}

\begin{abstract}
Let $K$ denote a number field, $S$ a finite set of places of $K$, and $\phi:\PP^n\rightarrow\PP^n$ a rational morphism defined over $K$. The main result of this paper proves that there are only finitely many twists of $\phi$ defined over $K$ which have good reduction at all places outside $S$.  This answers a question of Silverman in the affirmative.
\end{abstract}

\maketitle


\section{Introduction}\label{Introduction}

Let $K$ be a number field and $S$ a finite set of places of $K$ which includes all the Archimedian places.  For arithmetic objects defined over $K$ one can pose questions about the number of $K$-isomorphism classes which have good reduction at all places not in $S$.  Shafarevich initially asked this question for elliptic curves over $K$ and proved the number of classes to be finite (see \cite{silverman:aec}). Faltings subsequently proved the same for abelian varieties (see \cite{MR718935}).

The similarity between the arithmetic theory of elliptic curves and the arithmetic theory of rational morphisms has prompted many questions about dynamical analogues of Shafarevich's theorem for rational morphisms on projective space.  A similar finiteness result for rational morphisms can easily be seen to be false: any monic polynomial defined over $\Ocal_K$ on $\PP^1$ exhibits everywhere good reduction. For each $d\geq 2$ it is easy to show that there are infinitely many such polynomials which are non-isomorphic.

The notion of a dynamical Shafarevich theorem was studied first by Szpiro and Tucker in \cite{MR2435841} for rational maps on $\PP^1(K)$.  Szpiro-Tucker weaken the notion of $K$-isomorphism of rational maps by allowing pre-composition and post-composition by different elements of $\PGL_2$ and altering the notion of good reduction.  They subsequently obtain a finiteness result for rational maps with \textit{critical good reduction}.  In \cite{PetscheCSRM} Petsche obtains a different finiteness theorem by restricting the families of rational maps on $\PP^1$ under consideration, but retains the normal notion of $K$-isomorphism for rational maps. In a previous paper the author and Petsche consider whether set of quadratic rational maps of $\PP^1$ with good reduction outside $S$ is Zariski dense in the moduli space $\Mcal_2$ of quadratic rational maps (see \cite{PetscheStout1}). On the contrary, quadratic rational maps with everywhere good reduction over $\ZZ$ are Zariski dense in $\Mcal_2(\QQ)$.  By restricting the class of rational maps to those with two unramified fixed points and strengthening the notion of good reduction, the author and Petsche prove a Zariski non-density result.

In the present paper we consider a Shafarevich question originally posed by Silverman in Chapter Three of \cite{silverman:msad} regarding the finiteness of rational morphisms $\psi:\PP^n\rightarrow\PP^n$ defined over $K$ of degree $d\geq 2$ which have good reduction at all places $v\not\in S$ and are twists of a given rational morphism $\phi$ defined over $K$. 

We say that two rational morphisms $\phi ,\psi:\PP^n\rightarrow\PP^n$ defined over $K$ are $\Kbar$-\textit{isomorphic} if $\psi=\phi^f$ for $f\in\PGL_{n+1}(\Kbar)$ and $K$-\textit{isomorphic} if $\psi=\phi^f$ for $f\in\PGL_{n+1}(K)$. Here the notion $\phi^f$ denotes the conjugation of $\phi$ by $f$ (see Section \ref{Preliminaries}). These notions are clearly equivalence relations and we denote the set of rational morphisms which are $\Kbar$-isomorphic to $\phi$ by
\begin{equation*}
[\phi]=\{\psi\textrm{ defined over } K|\psi=\phi^f\textrm{ for some }f\in\PGL_{n+1}(\Kbar)\}
\end{equation*}
and the set of rational morphisms which are $K$-isomorphic to $\phi$ by
\begin{equation*}
[\phi]_K=\{\psi\textrm{ defined over } K|\psi=\phi^f\textrm{ for some }f\in\PGL_{n+1}(K)\}
\end{equation*}
We then define the set of twists of $\phi$ as the set of $K$-isomorphism classes of rational morphisms $\psi$ defined over $K$ which are $\Kbar$-isomorphic to $\phi$
\begin{equation*}
\Tw(\phi/K)=\lbrace [\psi]_K |\psi\text{ is defined over } K\text{ and } [\phi]=[\psi]\rbrace
\end{equation*}

We say that a morphism $\phi:\PP^n\rightarrow\PP^n$ defined over $K$ has good reduction at a non-Archimedean place $v$ of $K$ if there exists some conjugate $\psi=\phi^f$ defined over $K$ for $f\in\PGL_{n+1}(\Kbar)$ such that $\psi$ extends to a morphism of the same degree over the ring of $v$-adic integers $\Ocal_v$. For an equivalent and more precise definition, see Section \ref{Preliminaries}.  We remark that the notion of good reduction of a morphism is $K$-isomorphism invariant, and therefore the notion of good reduction of a twist at a place $v$ is immediate.

The principle theorem of this paper is the following.

\begin{thm}\label{MainThm}
Let $\phi:\PP^n\rightarrow\PP^n$ be a rational morphism of degree $d\geq 2$ defined over $K$ and let $S$ be a finite set of places including the Archimedean places.  Let 
\begin{center}
$\Vcal(S)=\lbrace [\psi]_K\in\Tw(\phi/K)|[\psi]_K\text{ has good reduction outside S}\rbrace$
\end{center}
Then $\Vcal(S)$ is finite.
\end{thm}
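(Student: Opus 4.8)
The plan is to follow the template of Shafarevich's theorem for elliptic curves and abelian varieties: classify the twists of $\phi$ by a Galois cohomology set, show that the good-reduction hypothesis forces the associated classes to be unramified outside a fixed finite enlargement of $S$, and then invoke Hermite--Minkowski.

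First I would record that, because $d\geq 2$, the stabilizer $\Aut(\phi)=\{f\in\PGL_{n+1}(\Kbar):\phi^f=\phi\}$ of $\phi$ for the conjugation action is a \emph{finite} group, and since $\phi$ is defined over $K$ it is stable under $\Gal(\Kbar/K)$, hence a finite $K$-group scheme. Standard twisting theory then provides a well-defined injection
\begin{equation*}
\xi\colon\Tw(\phi/K)\hookrightarrow H^1(\Gal(\Kbar/K),\Aut(\phi)),\quad[\phi^f]_K\mapsto[\,\sigma\mapsto\sigma(f)f^{-1}\,],
\end{equation*}
the cocycle being $\Aut(\phi)$-valued exactly because $\phi$ and $\phi^f$ are both defined over $K$, and $\xi$ being injective because two twists are $K$-isomorphic if and only if their cocycles are cohomologous. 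Thus it is enough to show $\xi(\Vcal(S))$ is finite.

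Next I would enlarge $S$ to a finite set $S'$, depending only on $\phi$ and $S$, by adjoining the places of bad reduction of $\phi$, the places dividing $|\Aut(\phi)|$, and the finitely many places at which a fixed finite-type model of $\Aut(\phi)$ over some $\Ocal_{K,T}$ fails to be finite \'etale, or at which the moduli space $\Mcal^n_d$ of degree-$d$ endomorphisms of $\PP^n$, or the stability of degree-$\geq 2$ morphisms, misbehaves. The key lemma is: for $v\notin S'$, if $\psi\in\Tw(\phi/K)$ has good reduction at $v$, then the class $\xi([\psi])$ is unramified at $v$. To prove it, use good reduction at $v$ (see Section \ref{Preliminaries}) to pick $h_0,h_1\in\PGL_{n+1}(K_v)$ such that $\Phi:=\phi^{h_0}$ and $\Psi:=\psi^{h_1}$ extend to degree-$d$ morphisms over $\Ocal_v$, and consider the scheme of conjugations $T=\Isom(\Phi,\Psi)\subseteq\PGL_{n+1,\Ocal_v}$, which is a pseudo-torsor under the finite \'etale group scheme $\Aut(\Phi)$. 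Its generic fibre is non-empty, since $\phi$ and $\psi$ become conjugate over $\overline{K_v}$. Its special fibre is also non-empty: the reductions $\bar\Phi$ and $\bar\Psi$ define the same point of $\Mcal^n_d$ over $k_v$ --- both reduce the common $\Ocal_v$-point coming from the equality $[\phi]=[\psi]$ in $\Mcal^n_d(K)$, which is unique by separatedness of $\Mcal^n_d$ --- and, degree-$\geq 2$ morphisms being stable, equal moduli points over the field $k_v$ force $\bar\Phi$ and $\bar\Psi$ into a single $\PGL_{n+1}(\overline{k_v})$-orbit. Hence $T$ is an $\Aut(\Phi)$-torsor, so it is finite \'etale over $\Ocal_v$, that is, a finite disjoint union of spectra of unramified extensions of $\Ocal_v$; any component supplies $g\in\PGL_{n+1}(K_v^{\mathrm{nr}})$ with $\Phi^g=\Psi$, whence $\phi^{h_0gh_1^{-1}}=\psi$ with $h_0gh_1^{-1}\in\PGL_{n+1}(K_v^{\mathrm{nr}})$. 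The cocycle $\sigma\mapsto\sigma(h_0gh_1^{-1})(h_0gh_1^{-1})^{-1}$ then represents the restriction of $\xi([\psi])$ to a decomposition group at $v$ and is trivial on inertia, proving the lemma. I expect this lemma to be the main obstacle, and within it the non-emptiness of the special fibre of $T$ --- equivalently, the fact that good integral models of $\Kbar$-conjugate morphisms have $\overline{k_v}$-conjugate reductions, a $\Kbar$-conjugation between two good models being a priori not $v$-integral. This is exactly the point at which the moduli space of endomorphisms of $\PP^n$ and the stability of degree-$\geq 2$ morphisms enter.

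Finally I would conclude with Hermite--Minkowski. By the lemma, every class in $\xi(\Vcal(S))$ is unramified outside $S'$, hence is inflated from $H^1(\Gal(K_{S'}/K),\Aut(\phi))$, where $K_{S'}$ denotes the maximal extension of $K$ unramified outside $S'$. Fixing a finite Galois extension $L/K$ unramified outside $S'$ over which $\Aut(\phi)$ becomes constant, and enlarging $S'$ so as to absorb the finitely many (by Hermite--Minkowski) extensions of $L$ of degree at most $|\Aut(\phi)|$ that are unramified outside the places of $L$ above $S'$, one sees that all classes in $\xi(\Vcal(S))$ are inflated from $H^1(\Gal(M/K),\Aut(\phi))$ for one finite Galois extension $M/K$; and this set is finite since $\Gal(M/K)$ and $\Aut(\phi)$ are both finite. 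Therefore $\xi(\Vcal(S))$ is finite, and since $\xi$ is injective, so is $\Vcal(S)$.
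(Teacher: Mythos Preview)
Your approach is correct in outline and \emph{genuinely different} from the paper's. The paper never touches Galois cohomology or Hermite--Minkowski: instead it fixes $M$ large enough (via Fakhruddin's Zariski density of preperiodic points) that $\PrePer(\phi,M)$ contains $n+2$ points in general position, enlarges $S$ so that the associated decomposable form has unit discriminant, shows (using global minimal models over a PID and an integrality lemma of Petsche--Szpiro--Tepper) that each $\PrePer(\psi_i,M)$ lies in the same class $\Pcal(S,N)$, and then applies Evertse--Gy\H{o}ry's finiteness theorem for decomposable forms of given discriminant to reduce to finitely many $\PGL_{n+1}(\Ocal_S)$-orbits. After conjugating each $\psi_i$ into the orbit of $\phi$, the conjugating elements $f_i$ all permute the fixed finite set $\PrePer(\phi,M)$, and the general-position subset forces the $f_i$ to be finite in number. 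Your route trades this Diophantine machinery (Evertse--Gy\H{o}ry, Fakhruddin) for GIT and moduli: you need the existence and separatedness of $\Mcal_d^n$ over $\Ocal_v$, the stability of degree-$\geq 2$ morphisms for general $n$, and the fact that the geometric quotient identifies orbits with moduli points. In exchange you get a proof that transparently mirrors Shafarevich's original, and which makes visible exactly why the good-reduction hypothesis controls the twist.

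One step you should tighten: the passage from ``both fibres of the pseudo-torsor $T=\Isom(\Phi,\Psi)$ are non-empty'' to ``$T$ is finite \'etale over $\Ocal_v$'' is not automatic, since a pseudo-torsor with non-empty fibres over a DVR need not be flat. What makes it work here is that the orbit map $\PGL_{n+1}\to\Hom$, $g\mapsto\Phi^g$, is \emph{smooth} onto its (closed) image once $\Aut(\Phi)$ is finite \'etale --- so the orbit is a smooth closed subscheme of $\Hom$ over $\Ocal_v$, isomorphic to $\PGL_{n+1}/\Aut(\Phi)$; your argument then shows $\Psi$ is an $\Ocal_v$-point of this orbit, and $T$ is the pullback of the $\Aut(\Phi)$-torsor $\PGL_{n+1}\to\PGL_{n+1}/\Aut(\Phi)$ along $\Psi$, hence finite \'etale. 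Stating it this way (or lifting a special-fibre conjugator via smoothness of $\PGL_{n+1}$ and then invoking Hensel for the \'etale $\Aut(\Phi)$) closes the gap cleanly.
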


The main theorem is proved by contradiction.  Assuming that $\Vcal(S)$ is infinite will give an infinite sequence of automorphisms 
\begin{equation*}
f_i\in\PGL_{n+1}(\Kbar)
\end{equation*}
which produce infinitely many distinct twists $[\psi_i]_K$ defined over $K$, each with good reduction at all $v\not\in S$. Fix an integer $M>n+1$ and let $\PrePer(\phi,M)$ denote the set of pre-periodic points for $\phi$ with forward orbit of size less than or equal to $M$. Then for each $i=1,2,\ldots$ the automorphism $f_i$ defines a bijection between $\PrePer(\phi,M)$ and $\PrePer(\psi_i,M)$ by $P\mapsto f(P)$. The premise of the proof is that, after passing to a suitable infinite subsequence and letting $M$ become sufficiently large, the the sets $\PrePer(\psi_i,M)$ can be assumed to all be equal from the Diophantine finiteness theorem which we will prove in Section 3. The main ingredient of this step will follow from a finiteness theorem for forms with unit discriminant due to Evertse-Gy\H{o}ry in \cite{EvertseGyory}. It will follow that each $f_i$ is a bijection on a finite subset of $\PP^n$ and, after concluding that some subset of $\PrePer(\phi,M)$ is in general position, only finitely many $f_i$ can exist.

\emph{Acknowledgements} The author would like to thank Clayton Pestche for his guidance over the last several years and for his generous friendship.  He would also like to thank Lucien Szpiro, Victor Kolyvagin, and Ken Kramer for their support under NSF grant DMS-0739346.


\section{Preliminaries}\label{Preliminaries}
\subsection{Review of rational morphisms on projective space.} We will fix the notation and definitions regarding the dynamics of rational morphisms on projective space; for more details see \cite{silverman:ads}.

Fix coordinates $(X_0:\ldots :X_n)$ of $\PP^n(\Kbar)$. An arbitrary rational morphism $\phi:\PP^n\rightarrow\PP^n$ defined over $\bar{K}$ is given by an $n+1$ tuple
\begin{equation*}
\phi(X_0:\ldots :X_n)=(F_0(X_0,\ldots ,X_n),\ldots ,F_n(X_0,\ldots ,X_n))
\end{equation*}
 where $F_i(X_0,\ldots ,X_n)$ is a homogeneous polynomial of degree $d$ for $i=0,\ldots, n$ and $F_0,\ldots ,F_n$ have no nontrivial common solutions. 
 
Using a multi-index $j=(j_0,\ldots ,j_n)$ where each $0\leq j_k\leq d$ and $j_0+\cdots +j_n=d$ we may write
\begin{equation*}
F_i=\underset{j}{\sum} a_{ij}X^j
\end{equation*}
with coefficients $a_{ij}\in\Kbar$.  Here  $X^j$ denotes the monomial $X_0^{j_0}\cdots X_n^{j_n}$. There are $\binom{n+d}{d}$ monomials of degree $d$ in $n+1$ variables and so $\phi$ can be identified with a point $(a_{0j}:\ldots :a_{nj})\in\PP^N$ where $N=N(n,d)=(n+1)\binom{n+d}{d}-1$. Conversely, any point of $\PP^N$ determines a rational map $\phi:\PP^n\rightarrow\PP^n$, although this map may not be a morphism.  The requirement that $\phi$ be a morphism is equivalent to the non-vanishing of the resultant polynomial, i.e. that $\Res(F_0,\ldots ,F_n)\neq 0$, where the resultant polynomial is a multi-homogeneous polynomial over $\ZZ$ in the coefficients $a_{ij}$.  

There is a natural $\PGL_{n+1}(\bar{K})$ action on rational morphisms which sends $f\in\PGL_{n+1}(\bar{K})$ and $\phi$ to $\phi^f=f\circ\phi\circ f^{-1}$. When $\phi$ is a rational morphism it may be iterated and we write $\phi^n=\phi\circ\phi\circ\cdots\circ\phi$ to denote the $n^\mathrm{th}$ iterate of $\phi$.  The action of conjugation is compatible with iteration in the sense that $(\phi^f)^n=(\phi^n)^f$.

There are natural sets of points in $\PP^n$ which can be associated to a rational morphism $\phi:\PP^n\rightarrow\PP^n$.  A point $P\in\PP^n(\Kbar)$ is \emph{periodic} if $\phi^m(P)=P$ for some positive integer $m\geq 1$ and \emph{pre-periodic} if some iterate $\phi^n(P)$ is periodic.  Equivalently, $P$ is pre-periodic if its forward orbit is finite.  

We use the notations $\Per(\phi),\PrePer(\phi)$ to denote the set of all periodic points or pre-periodic points, respectively, for a fixed rational morphism $\phi$.  We also use the notation 
\begin{equation}
\PrePer(\phi,M)=\{ P\in\PP^n |\text{   } |\Ocal_\phi(P)|\leq M\}
\end{equation}
to denote the set of pre-periodic points with forward orbit of length at most $M$.  Here $\Ocal_\phi(P)$ denotes the forward orbit of $P$ under $\phi$.  We use $\Fix(\phi)=\{P\in\PP^n|\phi(P)=P\}$ to denote the fixed points of $\phi$.

\subsection{Review of twists of rational morphisms.} Suppose $\phi,\psi:\PP^n\rightarrow\PP^n$ are rational morphisms of degree $d$ defined over $K$.  We say that $\phi$ and $\psi$ are $\Kbar$-\textit{isomorphic} if $\psi=\phi^f$ for some $f\in\PGL_{n+1}(\Kbar)$ and that they are $K$-\textit{isomorphic} if $\psi=\phi^f$ for some $f\in\PGL_{n+1}(K)$.  We denote the sets of $\Kbar$-isomorphic and $K$-isomorphic rational morphisms by

\begin{equation}
\begin{aligned}
\left[ \phi\right]  &=\lbrace \phi^f | f\in \PGL_2(\bar{K})\rbrace\\
[\phi]_K&=\lbrace \phi^f | f\in \PGL_2(K)\rbrace
\end{aligned}
\end{equation}

\begin{dfn}
Let $\phi$ and $\psi$ be two rational morphisms of degree $d$ over $K$. We say that the $K$-isomorphism classes $[\phi]_K$ and $[\psi]_K$ are twists if $[\phi]=[\psi]$.  The twist $[\psi]_K$ is a \emph{non-trivial} twist if $[\phi]_K\neq[\psi]_K$. We may also abuse this definition and call $\psi$ a twist of $\phi$ if they are $\Kbar$-isomorphic but not $K$-isomorphic.
\end{dfn}

Two rational morphisms $\phi,\psi$ defined over $K$ which are twists have identical geometric properties as morphisms on $\PP^n(\Kbar)$ but may have significantly different arithmetic properties as morphisms on $\PP^n(K)$. Let $\Hom$ denote the parameter space of rational morphisms of degree $d$ on $\PP^n$. Then if $\phi,\psi$ are twists they descend to the same point in the moduli space $\Mcal^n_d$ under the quotient map
\begin{equation*}
\Hom\rightarrow\Hom/\PGL_{n+1}=\Mcal^n_d.
\end{equation*}
When $n=1$, it is essential to note that the theorem proved in this paper is fundamentally different than the other dynamical-Shafarevich results of \cite{PetscheCSRM},\cite{PetscheStout1},and \cite{MR2435841} in the sense that the finiteness theorem of this paper holds only within a single $\Kbar$-isomorphism class of morphisms defined over $K$.

\subsection{Review of number theoretic preliminaries.}

Let $M_K$ denote the places of the number field $K$.  For any place $v\in M_K$ let $|\cdot|_v$ denote any absolute value on $K$ associated to $v$. If $v$ is non-Archimedean, let $K_v$ denote the completion of $K$ with respect to $v$ and
\begin{equation*}
\begin{aligned}
\Ocal_v &=\lbrace x\in K_v | |x|_v\leq 1\rbrace\\
\Ocal^\times_v &=\lbrace x\in K_v | |x|_v= 1\rbrace
\end{aligned}
\end{equation*}
denote the subring of $v$-integral elements and the group of $v$-adic units in $\Ocal_v$, respectively. $\Ocal_v$ is a discrete local ring with maximal ideal $\mfrak_v=\lbrace x\in K | |x|_v<1\rbrace$.  Let $\Ocal_v\rightarrow k_v=\Ocal_v/\mfrak_v$ be the reduction map on to the residue field $k_v$.  For $x\in \Ocal_v$ we denote the image of this map by $\tilde{x}_v$ or just $\tilde{x}$ if $v$ is understood. 

For a rational morphism $\phi:\PP^n\rightarrow\PP^n$ defined over $K$ and $v\in M_K$ we can define the \emph{reduction} of $\phi$ at the place $v$ in the following manner. Following the natural embedding $K\rightarrow K_v$ one can consider $\phi$ to be a rational morphism over $K_v$.  As $K_v=\Frac(\Ocal_v)$ and $\Ocal_v$ is a PID, one can choose homogeneous coefficients for $\phi=(a_{0j}:\ldots :a_{nj})$ as a point in $\PP^N$ such that $|a_{ij}|_v\leq 1$ and $\max_{i,j}(|a_{ij}|_v)=1$.

\begin{dfn}  The \emph{reduction of $\phi$ at $v$} is the rational map \begin{equation*}
\tilde{\phi}_v=(\tilde{a_{0j}}:\ldots :\tilde{a_{nj}})\in\PP^N(k_v)
\end{equation*}
\end{dfn}

This reduction is independent of the choice of homogeneous coordinates. The reduction of a morphism may or may not be a morphism over the residue field. 

\begin{dfn}
For a rational morphism $\phi:\PP^n\rightarrow\PP^n$ of degree $d$ we say that $\phi$ has \emph{good reduction} at $v$ if there exists some $f\in\PGL_{n+1}(K)$  such that $\widetilde{\phi^f}_v:\PP^n\rightarrow\PP^n$ is a morphism defined over $k_v$ and $\deg(\widetilde{\phi^f}_v)=d$. We say $\phi$ has \emph{bad reduction} otherwise.
\end{dfn}

By definition, the notion of good reduction is seen to be a $\PGL_2(K)$-invariant concept and is therefore well defined for a $K$-equivalence class $[\phi]_K$.

In this paper $S$ denotes a finite subset of $M_K$ which includes all of the Archimedean places, $\Ocal_S$ the $S$-integers of $K$, $\Ocal^\times_S$ the $S$-unit group of $K$.  More specifically,
\begin{equation*}
\begin{aligned}
\Ocal_S=&\lbrace x\in K | \vert x \vert_v\leq 1\textrm{ for all }v\not\in S\rbrace\\
\Ocal^\times_S=&\lbrace x\in K | \vert x \vert_v= 1\textrm{ for all }v\not\in S\rbrace
\end{aligned}
\end{equation*}

\begin{dfn}
The \emph{absolute $S$-integers of $\Kbar$} will consist of all elements of $\Kbar$ which are $w$-integral for every place $w$ of $\Kbar$ whose restriction to $K$ is not in $S$.
The absolute $S$-integers of $\Kbar$ are denoted by $\overline{\Ocal}_S$.
\end{dfn}

For a point $P=(p_0:\ldots :p_n)\in\PP^n(K)$ and a place $v$ we say that the coordinates are \emph{normalized} with respect to $v$, or $v$-normalized, if $|p_i|_v\leq 1$ for $0\leq i\leq n$ and $|p_i|=1$ for some $i$.  The following lemma is well known, so we omit the proof.

\begin{lem}
Let $P\in\PP^n(K)$ and $S$ be sufficiently large such that $\Ocal_S$ is a principal ideal domain.  Then there exists coordinates $(p_0:\ldots :p_n)$ for $P$ which are $v$-normalized for all $v\not\in S$.
\end{lem}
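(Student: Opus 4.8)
The plan is to exhibit the desired coordinates explicitly by using the principal ideal domain hypothesis to extract a greatest common divisor of a given set of coordinates. First I would start from any homogeneous representative $(q_0:\ldots:q_n)$ of $P$ with $q_i\in K$. Since $K=\Frac(\Ocal_S)$, each $q_i$ can be written as $a_i/b_i$ with $a_i,b_i\in\Ocal_S$; multiplying all coordinates by $b=b_0\cdots b_n\in\Ocal_S\setminus\{0\}$ (which does not change the point in $\PP^n(K)$) we may assume from the outset that $q_0,\ldots,q_n\in\Ocal_S$ and not all zero.

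Next I would form the ideal $I=(q_0,\ldots,q_n)\subseteq\Ocal_S$. By hypothesis $\Ocal_S$ is a PID, so $I=(g)$ for some nonzero $g\in\Ocal_S$. Setting $p_i=q_i/g$, each $p_i$ lies in $\Ocal_S$ because $q_i\in(g)$, and $(p_0:\ldots:p_n)$ is again a representative of $P$ since we have only rescaled by $g^{-1}\in K^\times$. Moreover $(p_0,\ldots,p_n)=g^{-1}I=\Ocal_S$ is the unit ideal.

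It then remains to check that this representative is $v$-normalized for every $v\notin S$. Recall that the places $v\notin S$ are in bijection with the nonzero prime (equivalently maximal) ideals of the Dedekind domain $\Ocal_S$, and that for $x\in K$ one has $x\in\Ocal_S$ if and only if $|x|_v\le 1$ for all $v\notin S$, while $x$ lies in the maximal ideal corresponding to $v$ precisely when $|x|_v<1$. Since $p_i\in\Ocal_S$ we get $|p_i|_v\le 1$ for all $i$ and all $v\notin S$. If for some $v\notin S$ we had $|p_i|_v<1$ for every $i$, then all $p_i$ would lie in the maximal ideal $\mfrak_v\cap\Ocal_S$, contradicting $(p_0,\ldots,p_n)=\Ocal_S$. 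Hence $\max_i|p_i|_v=1$ for every $v\notin S$, which is exactly the assertion that the coordinates are $v$-normalized outside $S$.

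I do not expect any serious obstacle here: the only points requiring care are the reduction to coordinates in $\Ocal_S$ (clearing denominators) and the standard dictionary between the primes of $\Ocal_S$ and the places $v\notin S$, together with the translation of "generates the unit ideal" into "some coordinate is a $v$-adic unit." This is why the statement is quoted as well known and the proof omitted in the main text; the argument above is the routine verification.
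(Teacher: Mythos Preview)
Your argument is correct and is the standard one: clear denominators to land in $\Ocal_S$, divide by a generator of the ideal $(q_0,\ldots,q_n)$ using the PID hypothesis, and then translate ``generates the unit ideal'' into $\max_i|p_i|_v=1$ for each $v\notin S$. The paper omits the proof entirely as well known, so there is nothing to compare against; your write-up is exactly the routine verification one would supply.
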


We will also call such coordinates normalized, and context will make it clear whether we refer to a single place $v$ or to all places $v\not\in S$. 


\section{A Diophantine result.}\label{FiniteOrbitSection}

Fix a number field $K$, an algebraic closure $\bar{K}$, and a morphism $\phi:\PP^n\rightarrow\PP^n$ of degree at least $d\geq 2$ defined over $K$. Fix projective coordinates $(X_0:\cdots:X_n)$ on $\PP^n$ and let $v$ denote a non-Archimedean place of $K$.

\begin{dfn}\label{DecompForm}
A form $F\in K[X_0,\cdots ,X_n]$ is called \emph{decomposable} if it can be factored over its splitting field as $F=\lambda\ell^{k_1}_1\cdots\ell^{k_t}_t$ for $\lambda\in K^*$, $\ell_1,\cdots ,\ell_t$ are pair-wise non-proportional homogeneous linear polynomials over $\bar{K}$ and $k_1,\cdots ,k_t$ are positive integers such that $k_1+\cdots +k_t=\deg(F)$.
\end{dfn}

This type of form is studied by Evertse and Gy\H{o}ry in \cite{EvertseGyory}. Each decomposable form has an associated discriminant, which is a fractional ideal of $\Ocal_S$. From this point on, by ``ideal" we mean fractional ideal.

Let $\Vcal\subset\PP^n(\Kbar)$ be a finite subset $|\Vcal |> n$. Let $P_0,\ldots ,P_n\in\Vcal$ be a collection of $n+1$ points. We define $\det(P_0,\ldots ,P_n)$ to be the determinant of the $n+1$ by $n+1$ matrix of the coordinates of the $P_i$.  This determinant depends on the choice of representation used to compute it. If the coordinates of $P_i$ are replaced by $(rp_{i0}:\ldots :rp_{in})$ with $r\in K$, then the determinant changes by a factor of $r$ as well.  Any linear homogeneous form $\ell\in K[X_0,\ldots ,X_n]$ in $n+1$ variables can be identified as a point in projective space by 

\begin{equation}
\ell=\Sigma_i p_iX_i\mapsto P=(p_0:\cdots :p_n).
\end{equation}

We can therefore define $\det(\ell_0,\ldots ,\ell_n)$ for $n+1$ linear forms of $K[X_0,\ldots ,X_n]$ in the analogous way.

\begin{dfn}
Suppose $F(X_0,...,X_n)$ is a decomposable form and $F=\lambda\ell^{k_1}_1\cdots\ell^{k_t}_t$ in the splitting field $L$ of $F$.  Then the \emph{discriminant} of $F$, denoted $D_F$, is an ideal of $\Ocal_S$ defined as follows
\begin{equation}
D_F=\underset{\Ical(F)}{\prod}\left(\dfrac{\det(\ell_{i_0},\cdots,\ell_{i_n})}{(\ell_{i_0})\cdots(\ell_{i_n})}\right)^2
\end{equation}
where $(\ell_i)$ denotes the ideal generated by the coefficients of $\ell_i$ and $\Ical(F)$ is the collection of $L$-linearly independent subsets $\lbrace \ell_{i_0},\ldots,\ell_{i_n}\rbrace$ of $\lbrace \ell_0,\ldots,\ell_t\rbrace$.
\end{dfn}

\begin{rem}
For each linear form $\ell_i$, the ideal $(\ell_i)$ is actually an ideal of the integral closure of $\Ocal_S$ in the splitting field $L$, but $D_F$ is an ideal of $\Ocal_S$. This follows from $D_F$ being invariant under $\Gal(L/K)$. For more details see the introduction of \cite{EvertseGyory}.
\end{rem}

\begin{rem}
If $F=\lambda\ell^{k_1}_1\cdots\ell^{k_t}_t$ is another representation of $F$, then we see that the scalar $\lambda$ is not used in the definition of the discriminant of $F$.  In particular, if $\gamma\in K^\times$, then $D_F=D_{\gamma F}$.
\end{rem}

\begin{dfn}
Let $F$ and $G$ be two decomposable forms in $n+1$ variables of degree $d$.  The forms $F$ and $G$ are \emph{weakly $\Ocal_S$-equivalent} if 
\begin{equation*}
F(X_0,\ldots ,X_n)=\lambda G(A(X_0,\ldots ,X_n))
\end{equation*}
for some $\lambda\in K^\times$ and some $A\in\GL_{n+1}(\Ocal_S)$.
\end{dfn}

To any subset $\Vcal\subset\PP^n(\Kbar)$ which is $\Gal(\Kbar/ K)$-stable we can associate a decomposable homogeneous form $F_\Vcal\in K[X_0,\ldots ,X_n]$ in the following manner:  Let $L$ denote the minimal splitting field of the set $\Vcal$.  For each point $P\in\Vcal$ let $\ell_P$ be the associated homogeneous form of degree $1$ for a choice of $L$-rational coordinates.  The form $\ell_P$ is well defined up to multiplication by a non-zero scalar of $L$.  Define

\begin{equation}
F_\Vcal=\prod_{P\in\Vcal}\ell_P
\end{equation}

As $\Vcal$ is $\Gal(\Kbar/K)$-stable, it follows that $F_\Vcal$ is a decomposable form of $K[X_0,\ldots ,X_n]$ of degree $|\Vcal|$ and is well defined up to multiplication by a non-zero scalar of $K$. Furthermore, it follows that $D_{F_\Vcal}$ is a well defined fractional ideal of $\Ocal_S$.  By the above remark, the discriminant ideal is unchanged if we multiply $F_\Vcal$ by a scalar of $\gamma\in K^\times$, so we may choose some $\gamma$ and, after multiplying $F_\Vcal$ through, assume that $F_\Vcal\in\Ocal_S[X_0,\ldots X_n]$.

\begin{dfn}\label{SIntSet}
Let $N\geq n+1$ be an integer and consider the set $\Pcal(S,N)$ of all subsets $\Vcal\subset\PP^n(\bar{K})$ such that the following conditions hold:
\begin{enumerate}
\item $\vert\Vcal\vert =N$.
\item $\Vcal$ is $\Gal(\Kbar/K)$-stable.
\item $\Vcal$ contains at least one linearly independent $(n+1)$-point subset.
\item $D_{F_\Vcal}=\Ocal_S$
\end{enumerate}
\end{dfn}

The condition that $D_{F_\Vcal}=\Ocal_S$ generalizes the familiar notion of pair-wise $S$-integrality for points of projective space. Recall that pair-wise $S$-integrality in $\PP^1$ merely requires that two distinct points $P,Q\in\PP^1(K)$ reduce to distinct points in $\PP^1(k_v)$ for every $v\not\in S$. The above requirement on the discriminant ideal is strictly stronger than pair-wise $S$-integrality as it requires that linear independent points of $\PP^n(\Kbar)$ not only remain distinct after reduction at each place $v$, but also that they remain linearly independent. In general, any place $v$ of $K$ for which the the valuation $v(D_{F_\Vcal})>0$ (of which there can only be finitely many) we have that some set of $n+1$ linearly independent points $P_0,\ldots ,P_n\in\Vcal$ descend to linearly dependent points in $\tilde{P_0},\ldots ,\tilde{P_n}\in\PP^n(k_v)$.

\begin{lem}\label{DeltaInv}
Let $\Vcal,\Wcal\subset\PP^n(\Kbar)$ be two $\Gal(\Kbar/K)$-invariant subsets such that $|\Vcal|=|\Wcal|=N$, both contain at least one linearly independent $(n+1)$-point subset, and $f(\Vcal)=\Wcal$ for some $f\in\PGL_{n+1}(\overline{\Ocal}_S)$. Then $D_{F_\Vcal}=D_{F_\Wcal}$. In particular, $D_{F_\Vcal}=\Ocal_S$ if and only if $D_{F_{f(\Vcal)}}=\Ocal_S$.
\end{lem}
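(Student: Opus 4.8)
The plan is to show that the discriminant ideal $D_{F_\Vcal}$ is unchanged when we apply an element $f\in\PGL_{n+1}(\overline{\Ocal}_S)$, by tracking how the defining quantity transforms. First I would fix a matrix representative $A\in\GL_{n+1}(\overline{\Ocal}_S)$ for $f$; since $f$ has entries in the absolute $S$-integers, so does $A^{-1}$ up to the scalar $\det(A)$, and in fact after rescaling we may assume $A$ itself and $\det(A)$ are both absolute $S$-units at every $w$ not lying over $S$ — this is the crucial integrality input. Writing $F_\Vcal=\prod_{P\in\Vcal}\ell_P$ over the splitting field $L$, the image set $\Wcal=f(\Vcal)$ has defining form $F_\Wcal=\prod_{P\in\Vcal}\ell_{f(P)}$, and the linear form attached to $f(P)$ is (up to an $L^\times$-scalar which does not affect $D$) the pullback $\ell_P\circ A^{-1}$, i.e. the form whose coefficient vector is $A^{-T}$ applied to that of $\ell_P$. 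So there is an index-preserving bijection between the linear factors of $F_\Vcal$ and those of $F_\Wcal$, and it sends $L$-linearly independent $(n+1)$-subsets to $L$-linearly independent $(n+1)$-subsets; hence $\Ical(F_\Vcal)$ and $\Ical(F_\Wcal)$ correspond.

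Next I would compute each term of the product defining $D_{F_\Wcal}$. For a linearly independent subset $\{\ell_{i_0},\dots,\ell_{i_n}\}$, the numerator $\det(\ell_{f(P_{i_0})},\dots,\ell_{f(P_{i_n})})$ equals $\det(A^{-T})\cdot\det(\ell_{i_0},\dots,\ell_{i_n}) = \det(A)^{-1}\det(\ell_{i_0},\dots,\ell_{i_n})$ (up to the same $L^\times$-scalars already noted). For the denominator, the ideal $(\ell_{f(P)})$ generated by the coefficients of $\ell_P\circ A^{-1}$ is related to $(\ell_P)$ by the inclusion $(\ell_{f(P)})\subseteq (\det A)^{-1}(\ell_P)$ coming from $A^{-1}=(\det A)^{-1}\,\adj(A)$ with $\adj(A)$ having entries in $\overline{\Ocal}_S$; applying the same argument to $f^{-1}$ gives the reverse inclusion $(\ell_P)\subseteq(\det A)\,(\ell_{f(P)})$, so $(\ell_{f(P)}) = (\det A)^{-1}(\ell_P)$ as fractional ideals of the integral closure of $\Ocal_S$ in $L$. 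Substituting these into the defining formula, each factor
\[
\left(\frac{\det(\ell_{f(P_{i_0})},\dots,\ell_{f(P_{i_n})})}{(\ell_{f(P_{i_0})})\cdots(\ell_{f(P_{i_n})})}\right)^2
=\left(\frac{(\det A)^{-1}\det(\ell_{i_0},\dots,\ell_{i_n})}{(\det A)^{-(n+1)}(\ell_{i_0})\cdots(\ell_{i_n})}\right)^2
=(\det A)^{2n}\left(\frac{\det(\ell_{i_0},\dots,\ell_{i_n})}{(\ell_{i_0})\cdots(\ell_{i_n})}\right)^2 .
\]
Since $\det A\in\overline{\Ocal}_S^{\,\times}$ (it is a unit at every $w$ not over $S$), the principal ideal $(\det A)$ is trivial, so each factor is unchanged and $D_{F_\Wcal}=D_{F_\Vcal}$. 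Both sides descend to ideals of $\Ocal_S$ by Galois-invariance (the Remark after the discriminant definition), so the equality holds in $\Ocal_S$, and the final sentence about $D_{F_\Vcal}=\Ocal_S$ is immediate.

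The main obstacle I anticipate is the bookkeeping of scalar ambiguities: each $\ell_P$ is defined only up to $L^\times$, so one must check that the chosen representatives for $\ell_{f(P)}$ can be taken literally equal to $\ell_P\circ A^{-1}$ (not merely proportional), and then verify that the $L^\times$-scalars introduced cancel between numerator and denominator exactly as in the original definition's invariance under rescaling — this is the content of the second Remark, applied termwise. A secondary point requiring care is the normalization of $A$: one must argue that $f\in\PGL_{n+1}(\overline{\Ocal}_S)$ admits a lift $A$ with $A,\ A^{-1}\in M_{n+1}(\overline{\Ocal}_S)$, equivalently $\det A\in\overline{\Ocal}_S^{\,\times}$, which is exactly the statement that $f$ reduces to an automorphism of $\PP^n$ over every residue field $k_w$ with $w\nmid S$ — this is what "$f\in\PGL_{n+1}(\overline{\Ocal}_S)$" should mean, and it is the hypothesis that makes the $(\det A)$-factors disappear.
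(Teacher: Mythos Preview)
Your approach is correct in spirit and reaches the right conclusion, but it differs from the paper's proof and contains a small computational slip worth flagging.

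The paper's argument is much shorter: it passes to a finite extension $L$ over which everything is defined, lifts $f$ to a matrix $A\in\GL_{n+1}(\Ocal_T)$ (with $\Ocal_T$ the integral closure of $\Ocal_S$ in $L$), observes that $A^t$ realises a weak $\Ocal_T$-equivalence between $F_\Vcal$ and $F_\Wcal$, and then simply invokes the fact from Section~1 of Evertse--Gy\H{o}ry that the discriminant ideal is invariant under weak equivalence. Your proof is essentially an explicit unpacking of that invariance statement: you track numerator and denominator through the change of variables and watch the powers of $\det A$ cancel because $\det A$ is an $S$-unit. This is a legitimate, more self-contained route; what it buys you is independence from the cited reference, at the cost of some bookkeeping.

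The slip: under the paper's identification $\ell_P=\sum p_iX_i$, the coefficient vector of $\ell_{f(P)}$ is $Ap$, not $A^{-T}p$; equivalently $\ell_{f(P)}=\ell_P\circ A^{T}$, not $\ell_P\circ A^{-1}$. (You may be conflating this formal coefficients-to-coordinates identification with the dual action on hyperplanes.) With the correct matrix, the content ideals satisfy $(\ell_{f(P)})=(\ell_P)$ directly (both $A$ and $A^{-1}$ have $\overline{\Ocal}_S$-entries), and each factor of the discriminant product picks up $(\det A)^2$ rather than $(\det A)^{2n}$. Since $\det A\in\overline{\Ocal}_S^{\times}$ either way, the conclusion is unaffected, but you should correct the transformation law.
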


\begin{proof}
Let $L/K$ be a finite extension such that $\Vcal,\Wcal$ are subsets of $\PP^n(L)$ and such that $f\in\PGL_n(\Ocal_T)$, where $\Ocal_T$ refers to the integral closure of $\Ocal_S$ in $L$. Let $A\in\GL_{n+1}(\Ocal_T)$ be a lift of $f$. It follows that $A^t$ defines a weak $\Ocal_S$-equivalence between $F_\Vcal$ and $F_\Wcal$. By Section 1 of \cite{EvertseGyory} the discriminant is invariant under weak $\Ocal_S$-equivalence. It follows that $D_{F_\Vcal}=D_{F_\Wcal}$.
\end{proof}

\begin{lem}
There exists a group action 
\begin{equation*}
\PGL_{n+1}(\Ocal_S)\times\Pcal(S,N)\rightarrow\Pcal(S,N)
\end{equation*}
defined by $(f,\Vcal)\mapsto f(\Vcal)=\lbrace f(P)|P\in\Vcal\rbrace$.
\end{lem}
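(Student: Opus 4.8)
The plan is to verify the three requirements of an action in turn: that $f(\Vcal)$ again lies in $\Pcal(S,N)$, that the identity of $\PGL_{n+1}(\Ocal_S)$ acts trivially, and that the action is compatible with composition in $\PGL_{n+1}(\Ocal_S)$. The last two are formal: if $e$ denotes the identity automorphism then $e(\Vcal)=\Vcal$ by definition, and for $f,g\in\PGL_{n+1}(\Ocal_S)$ one has $(fg)(\Vcal)=\{(fg)(P)\mid P\in\Vcal\}=\{f(g(P))\mid P\in\Vcal\}=f(g(\Vcal))$, since applying a composition of linear automorphisms pointwise agrees with applying them successively. So the entire content is the well-definedness claim, namely that $f(\Vcal)\in\Pcal(S,N)$ whenever $\Vcal\in\Pcal(S,N)$ and $f\in\PGL_{n+1}(\Ocal_S)$.

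For well-definedness I would check conditions (1)--(4) of Definition \ref{SIntSet} for $f(\Vcal)$. Condition (1) is immediate since $f$ is a bijection on $\PP^n(\Kbar)$, so $|f(\Vcal)|=|\Vcal|=N$. For condition (2), the key point is that $f$ is defined over $K$ (indeed over $\Ocal_S\subset K$), hence commutes with the action of $\Gal(\Kbar/K)$: for $\sigma\in\Gal(\Kbar/K)$ and $P\in\Vcal$ we have $\sigma(f(P))=f(\sigma(P))\in f(\Vcal)$, so $f(\Vcal)$ is $\Gal(\Kbar/K)$-stable. For condition (3), if $\{P_0,\ldots,P_n\}\subset\Vcal$ is linearly independent, then since $f$ is an invertible linear automorphism it carries linearly independent points to linearly independent points, so $\{f(P_0),\ldots,f(P_n)\}\subset f(\Vcal)$ is a linearly independent $(n+1)$-point subset. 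Condition (4), $D_{F_{f(\Vcal)}}=\Ocal_S$, follows directly from Lemma \ref{DeltaInv}: since $f\in\PGL_{n+1}(\Ocal_S)\subset\PGL_{n+1}(\overline{\Ocal}_S)$, and both $\Vcal$ and $f(\Vcal)$ have size $N$ and contain a linearly independent $(n+1)$-point subset (the latter by condition (3) just verified), the lemma gives $D_{F_\Vcal}=D_{F_{f(\Vcal)}}$, and the right-hand side equals $\Ocal_S$ because the left-hand side does.

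I expect the main (though still modest) obstacle to be purely bookkeeping: one must make sure that $f(\Vcal)$ genuinely satisfies the hypotheses needed to invoke Lemma \ref{DeltaInv} — in particular that it contains a linearly independent $(n+1)$-subset, which is why condition (3) must be checked \emph{before} condition (4) rather than after. Everything else is a direct consequence of $f$ being an $\Ocal_S$-rational linear automorphism: it preserves cardinality, Galois-stability, linear independence, and (via the cited weak $\Ocal_S$-equivalence invariance of the discriminant behind Lemma \ref{DeltaInv}) the discriminant ideal. No new input beyond Lemma \ref{DeltaInv} is required.
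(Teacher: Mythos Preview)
Your proof is correct and follows essentially the same route as the paper: verify conditions (1)--(4) of Definition~\ref{SIntSet} for $f(\Vcal)$, with (4) deduced from Lemma~\ref{DeltaInv}. You are in fact more careful than the paper, which is terse about the action axioms and appears to contain a typo (it initially writes ``(1) and (4)'' where ``(1) and (3)'' is clearly intended), whereas you correctly observe that (3) must be checked before invoking Lemma~\ref{DeltaInv} for (4).
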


\begin{proof}
Let $f,g,h\in\PGL_{n+1}(\Ocal_S)$ and $\Vcal\in\Pcal(S,N)$. 

We must show that conditions $(1)$ to $(4)$ in the definition of $\Pcal(S,N)$ hold for $f(\Vcal)$.  Requirements $(1)$ and $(4)$ are obviously satisfied since $f$ is an automorphism of $\PP^n$. Let $\sigma\in\Gal(\bar{K}/K)$ and $f(P)\in f(\Vcal)$.  Then, since $f$ is defined over $K=\mathrm{Frac}(\Ocal_S)$ and $\Vcal$ is $\Gal(\bar{K}/K)$-stable we have $\sigma\cdot f(P)=f(\sigma\cdot P)=f(Q)\in f(\Vcal)$ for some $Q\in\Vcal$, proving condition $(2)$. Lastly, $(4)$ follows from the preceding lemma.
\end{proof}

\begin{thm}\label{FiniteOrbit}
The group action of $\PGL_{n+1}(\Ocal_S)$ on $\Pcal(S,N)$ has only finitely many orbits.
\end{thm}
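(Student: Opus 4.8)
The plan is to reduce the statement to the Evertse–Győry finiteness theorem for decomposable forms with bounded discriminant. The key observation is that an element $\Vcal\in\Pcal(S,N)$ is essentially the same data as its associated decomposable form $F_\Vcal\in\Ocal_S[X_0,\ldots,X_n]$ (taken up to a scalar in $K^\times$), and that the $\PGL_{n+1}(\Ocal_S)$-action on $\Pcal(S,N)$ corresponds, via $\Vcal\mapsto F_\Vcal$, to the action of $\GL_{n+1}(\Ocal_S)$ on forms by linear substitution (composed with the transpose, as in the proof of Lemma~\ref{DeltaInv}). Under this correspondence, two sets $\Vcal,\Wcal$ are in the same $\PGL_{n+1}(\Ocal_S)$-orbit if and only if the forms $F_\Vcal$ and $F_\Wcal$ are weakly $\Ocal_S$-equivalent. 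So it suffices to show that there are only finitely many weak $\Ocal_S$-equivalence classes of decomposable forms $F\in\Ocal_S[X_0,\ldots,X_n]$ of degree $N$ that arise as $F_\Vcal$ for some $\Vcal\in\Pcal(S,N)$.

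First I would make the dictionary precise. Given $\Vcal$, the form $F_\Vcal=\prod_{P\in\Vcal}\ell_P$ is decomposable of degree $N$ with all multiplicities equal to $1$, its discriminant ideal is $\Ocal_S$ by condition~(4), and condition~(3) guarantees that among the linear factors there is an $L$-linearly independent $(n+1)$-subset, so $F_\Vcal$ genuinely involves all $n+1$ variables (it is not, after a substitution, a form in fewer variables). Conversely, any decomposable form $F=\lambda\ell_1\cdots\ell_N$ over $\Ocal_S$ with pairwise non-proportional linear factors and $D_F=\Ocal_S$, whose factors span, recovers a set $\Vcal=\{P_{\ell_1},\ldots,P_{\ell_N}\}\in\Pcal(S,N)$; and $\Vcal$ is recovered up to the ambiguity of scaling individual factors, which does not affect the point set. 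The action needs a small check: if $A\in\GL_{n+1}(\Ocal_S)$ lifts $f\in\PGL_{n+1}(\Ocal_S)$, then $f(\Vcal)$ corresponds to the form obtained from $F_\Vcal$ by the substitution $X\mapsto (A^{-1})^t X$ (up to scalar), exactly the transpose bookkeeping used in Lemma~\ref{DeltaInv}; since $(A^{-1})^t\in\GL_{n+1}(\Ocal_S)$, orbits map to weak-equivalence classes and vice versa.

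Next I would invoke the main finiteness theorem of Evertse–Győry \cite{EvertseGyory}: for fixed $N$ and fixed nonzero discriminant ideal (here $\Ocal_S$), the decomposable forms $F\in\Ocal_S[X_0,\ldots,X_n]$ of degree $N$ with $D_F=\Ocal_S$ fall into finitely many weak $\Ocal_S$-equivalence classes. (One must be slightly careful about which forms their theorem covers — typically one needs the linear factors to be pairwise non-proportional and to span, i.e. the form is not ``degenerate'' — but these are exactly conditions~(2),(3) translated, and the separability/non-proportionality is built into $F_\Vcal$ having a nonzero discriminant. If their hypotheses are phrased for squarefree forms this is automatic here; if there is a residual discrepancy it is handled by noting that there are only finitely many partitions of $N$, hence finitely many ``shapes'' of decomposable form to consider.) Combining this with the dictionary of the previous paragraph, finitely many weak-equivalence classes of forms means finitely many $\PGL_{n+1}(\Ocal_S)$-orbits in $\Pcal(S,N)$, which is the claim.

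The main obstacle is the precise matching of hypotheses between the definition of $\Pcal(S,N)$ and the exact statement of the Evertse–Győry theorem being quoted — in particular ensuring that the ``non-degeneracy'' needed for their finiteness result (linear factors pairwise independent, and spanning the whole space of variables) is exactly guaranteed by conditions~(2) and~(3), and that passing between $\PGL$ and $\GL$ (choice of lift $A$, the scalar $\lambda$, and the transpose) introduces no loss. The Galois-descent point — that $F_\Vcal$ has coefficients in $K$ and, after scaling, in $\Ocal_S$, so that it is a legitimate input to the $\Ocal_S$-form machinery — is already recorded in the discussion preceding Definition~\ref{SIntSet} and in the remark after the discriminant definition, so I would simply cite that. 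Once the dictionary is set up cleanly, the theorem is immediate.
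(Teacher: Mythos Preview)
Your proposal is correct and follows essentially the same route as the paper: both arguments reduce to Evertse--Gy\H{o}ry's finiteness theorem for weak $\Ocal_S$-equivalence classes of decomposable forms of fixed degree and discriminant, and both translate that back to $\Pcal(S,N)$ via the transpose correspondence between the $\GL_{n+1}(\Ocal_S)$-action on forms and the $\PGL_{n+1}(\Ocal_S)$-action on point sets. The paper is slightly more economical in that it only verifies the one implication actually needed (weakly $\Ocal_S$-equivalent forms $\Rightarrow$ same $\PGL_{n+1}(\Ocal_S)$-orbit), whereas you set up the full two-way dictionary; but this is a difference of exposition, not of method.
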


\begin{proof}
This theorem is a reformulation of Corollary 2 of Evertse and Gy\H{o}ry in \cite{EvertseGyory}. Their corollary states there are only finitely many weak $\Ocal_S$-equivalence classes of decomposable forms in $K[X_0,\ldots ,X_n]$ of fixed degree $N$ and given discriminant ideal $D$.

To conclude the proof of this theorem it suffices to show that if
\begin{equation*}
\Vcal=\lbrace P_1,\ldots ,P_N\rbrace,\Wcal=\lbrace Q_1,\ldots ,Q_N\rbrace\in\Pcal(S,N)
\end{equation*}
and if the forms $F_{\Vcal}$ and $F_{\Wcal}$ are weakly $\Ocal_S$-equivalent, then the sets $\Vcal$ and $\Wcal$ are in the same $\PGL_{n+1}(\Ocal_S)$-orbit.

Let $F$ and $G$ denote $F_{\Vcal}$ and $F_{\Wcal}$, respectively.  If $F$ and $G$ are weakly $\Ocal_S$-equivalent then there exists $\lambda\in K^\times$ and $A\in\GL_{n+1}(\Ocal_S)$ such that
\begin{equation*}
F(X_0,\ldots ,X_n)=\lambda G(A(X_0,\ldots ,X_n))
\end{equation*}
It follows that
\begin{equation*}
\underset{1\leq i\leq N}{\prod}\ell_{P_i}(X_0,\ldots ,X_n)=\lambda\underset{1\leq i\leq N}{\prod}\ell_{Q_i}(A((X_0,\ldots ,X_n))
\end{equation*}
After reordering we have that 
\begin{equation*}
\ell_{P_i}(X_0,\ldots ,X_n)=\lambda_i\ell_{Q_i}(A(X_0,\ldots ,X_n))
\end{equation*}
for $\lambda_i\in K^\times$. Equating coefficients gives that
\begin{equation*}
(p_{0i},\ldots ,p_{ni})=\lambda_i A^t(q_{0i},\ldots ,q_{ni}) 
\end{equation*}
where $A^t\in\GL_{n+1}(\Ocal_S)$ is the transpose of $A$. Let $a\in\PGL_{n+1}(\Ocal_S)$ be the corresponding projective linear transformation to $A^t$. Note that when we pass to projective space the scalars $\lambda_i$ become irrelevant.  Then $P_i=a(Q_i)$ and therefore $\Vcal=a(\Wcal)$. 
\end{proof}


\section{Main Theorem}
Let $M\geq 1$ and $\PrePer(\phi,M)$ denote the set of all points in $\PP^n(\Kbar)$ which are $\phi$-preperiodic of forward orbit has size at most $M$. It is known from the theory of canonical heights that the set $\PrePer(\phi,M)$ is finite (see \cite{silverman:ads}). Let $N=|\PrePer(\phi,M)|$. Every rational morphism of degree at least 2 has infinitely many preperiodic points, so by increasing $M$ we may assume that $N\geq n+2$ and moreover, by Fakhruddin's result on the Zariski density of preperiodic points (see Theorem 5.1 of \cite{Fak}) we may assume that there is a subset of $\PrePer(\phi,M)$ consisting of $n+2$ points which lie in general position.  

\begin{dfn}
A subset $\Vcal\subset\PP^n(\Kbar)$ with $|\Vcal|\geq n+1$ is in \emph{general position} if no $(n+1)$-point subset of $\Vcal$ lies in a hyperplane.
\end{dfn}

\begin{lem}\label{FixGenPos}
Let $\Vcal ,\Wcal\subset\PP^n(\Kbar)$ be finite , and assume that $\Vcal$ has a subset $\Vcal_0$ in general position with $|\Vcal_0|=n+2$. Then there exist only finitely many automorphisms $f\in\PGL_{n+1}(\Kbar)$ such that $f(\Vcal)=\Wcal$.
\end{lem}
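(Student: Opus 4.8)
The plan is to reduce the statement to a counting argument over a finite set of candidate images. First I would observe that since $\Wcal$ is finite, any $f$ with $f(\Vcal)=\Wcal$ restricts to a bijection $\Vcal\to\Wcal$, and in particular sends the distinguished subset $\Vcal_0$ (of size $n+2$ in general position) to some $(n+2)$-point subset of $\Wcal$. There are only finitely many such subsets, and for each of them only finitely many bijections $\Vcal_0\to\Wcal$; so it suffices to show that each such bijection $g\colon\Vcal_0\to\Wcal$ extends to at most one automorphism $f\in\PGL_{n+1}(\Kbar)$.

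The heart of the argument is the classical fact that an element of $\PGL_{n+1}$ is determined by its action on $n+2$ points in general position. I would prove this directly: label the points of $\Vcal_0$ as $P_0,\dots,P_n,P_{n+1}$. Because $P_0,\dots,P_n$ are linearly independent (no $n+1$ of the $n+2$ lie in a hyperplane, by general position), after choosing coordinate representatives we may use them as a projective frame, and the condition $P_{n+1}$ not lying in any hyperplane spanned by $n$ of the others forces, in the standard normalization, $P_{n+1}=(1:1:\cdots:1)$ up to this coordinate choice. Now if $f,f'\in\PGL_{n+1}(\Kbar)$ both send $P_i\mapsto g(P_i)$ for $i=0,\dots,n+1$, then $h=f'^{-1}\circ f$ fixes all $n+2$ points of $\Vcal_0$. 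Lifting $h$ to a matrix $H\in\GL_{n+1}(\Kbar)$: fixing the $n+1$ independent points $P_0,\dots,P_n$ forces $H$ to be diagonal in the corresponding basis, and fixing $P_{n+1}=(1:\cdots:1)$ forces all diagonal entries equal; hence $H$ is scalar and $h$ is the identity. Thus $f=f'$.

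Combining these pieces: the set of $f$ with $f(\Vcal)=\Wcal$ injects into the (finite) set of injections $\Vcal_0\hookrightarrow\Wcal$, via $f\mapsto f|_{\Vcal_0}$, and this map is injective by the uniqueness just established. Hence the set of such $f$ is finite, which is the claim. (One should note that not every such $g$ need extend to an $f$ sending all of $\Vcal$ to $\Wcal$, but we only need the upper bound, so this causes no difficulty.)

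The main obstacle is making the ``projective frame'' normalization rigorous with the general-position hypothesis as stated — in particular verifying that ``no $(n+1)$-point subset lies in a hyperplane'' is exactly the condition needed to put $P_0,\dots,P_n$ as the standard basis vectors and $P_{n+1}$ as $(1:\cdots:1)$, and that the image points $g(P_i)$ inherit enough independence for $H$ to be forced diagonal. This is where I would be most careful, though it is classical; everything else is bookkeeping over finite sets.
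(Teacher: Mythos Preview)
Your proof is correct and follows essentially the same approach as the paper: both reduce to the classical rigidity statement that an element of $\PGL_{n+1}$ is determined by its action on $n+2$ points in general position, combined with the finiteness of possible images of $\Vcal_0$ inside $\Wcal$. The only difference is cosmetic---you argue directly by injecting $\{f:f(\Vcal)=\Wcal\}$ into the finite set of injections $\Vcal_0\hookrightarrow\Wcal$, whereas the paper argues by contradiction, passing to a subsequence with $f_i(\Vcal_0)=\Wcal_0$ constant, composing with a fixed $g$ sending $\Wcal_0$ back to $\Vcal_0$, and then bounding the resulting self-maps of $\Vcal_0$ by $(n+2)!$.
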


\begin{proof}
Suppose the contrary and that $f_1,f_2,f_3,\ldots\in\PGL_{n+1}(\Kbar)$ is an infinite sequence of distinct automorphisms such that $f_i(\Vcal)=\Wcal$. Since $\Wcal$ is finite, it has only finitely many subsets, and we may assume, after perhaps passing to an infinite subsequence, that there exists a subset $\Wcal_0\subset\Wcal$ in general position with $|\Wcal_0|=n+2$ and $f_i(\Vcal_0)=\Wcal_0$ for all $i$. Choose $g\in\PGL_{n+1}(\Kbar)$ such that $g(\Wcal_0)=\Vcal_0$. Then the compositions $g\circ f_i$ form an infinite sequence of distinct automorphisms in $\PGL_{n+1}(\Kbar)$ such that $g\circ f_i(\Vcal_0)=\Vcal_0$. This gives a contradiction as there are only $(n+2)!$ such automorphisms.
\end{proof}

\begin{dfn}
A \emph{homogeneous lift} of $\phi$ is a map $\Phi:\AA^{n+1}\rightarrow\AA^{n+1}$ given by a $(n+1)$-tuple $(F_0,\ldots ,F_n)$ of forms $F_i$ such that $\phi(p_0:\ldots :p_n)=(F_0(p_0:\ldots:p_n):\ldots :F_n(p_0:\ldots :p_n))$ for all points $P=(p_0:\ldots :p_n)\in\PP^n$. An $\Ocal_S$-\emph{model} for a rational morphism $\phi:\PP^n\rightarrow\PP^n$ defined over $K$ is any conjugate of a homogeneous lift defined over $\Ocal_S$.
\end{dfn}

\begin{prop}\label{PrePerinPcal}
Assume that $\Ocal_S$ is a PID. Let $\phi,\psi:\PP^n\rightarrow\PP^n$ be rational morphisms defined over $K$ of degree $d$, both having good reduction at all places $v$ of $K$ outside $S$.  Assume that $[\psi]_K\in\Tw(\phi/K)$.
\begin{enumerate}[(a)]
\item There exist rational morphisms $\phi_0\in [\phi]_K$ and $\psi_0\in [\psi]_K$, and homogeneous lifts $\Phi,\Psi:\AA^{n+1}\rightarrow\AA^{n+1}$ of $\phi_0$ and $\psi_0$, respectively, such that $\Phi,\Psi$ have coefficients in $\Ocal_S$ and resultants $\Res(\Phi),\Res(\Psi)\in\Ocal^\times_S$.
\item There exists $A\in GL_{n+1}(\overline{\Ocal}_S)$ such that $\Psi=\Phi^A$.
\item For each integer $M\geq 1$, we have 
\begin{equation*}
\PrePer(\psi_0,M)=f(\PrePer(\phi_0,M)),
\end{equation*}
where $f:\PP^n\rightarrow\PP^n$ is the automorphism associated to $A$.  Moreover, $\PrePer(\phi_0,M)\in\Pcal(S,N)$ if and only if $\PrePer(\psi_0,M)\in\Pcal(S,N)$.
\end{enumerate}
\end{prop}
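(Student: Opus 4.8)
The plan is to prove (a), (b), (c) in turn: (a) realizes $\phi$ and $\psi$ by models over $\Ocal_S$ with unit resultant; (b) forces the geometric conjugation between them to be $\overline{\Ocal}_S$-integral; and (c) then reads off the statement about $\PrePer$ from Lemma~\ref{DeltaInv}.

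For (a), I start from an arbitrary homogeneous lift of $\phi$ over $K$, clear denominators, and divide out the content — legitimate because $\Ocal_S$ is a PID — to obtain a primitive model $\Phi_1$ over $\Ocal_S$; then $\Res(\Phi_1)\in\Ocal_S\setminus\{0\}$ and $\phi$ has good reduction with this model at every $v\notin S$ outside the finite set $T=\{v\notin S: v(\Res(\Phi_1))>0\}$. One removes the places of $T$ one at a time: good reduction of $\phi$ at $v_0\in T$ provides a nonempty open set of $g\in\PGL_{n+1}(K_{v_0})$ for which $\Phi_1^g$ is good at $v_0$, and choosing such a $g$ that is moreover integral with integral inverse away from $v_0$ — possible by approximation, the relevant localizations of $\Ocal_S$ being again PIDs and hence supplying enough units — does not disturb good reduction at the places outside $S\cup T$, since conjugation by an element of $\PGL_{n+1}(\Ocal_v)$ preserves the property of being a primitive model with unit resultant (a model with integral coefficients and unit resultant is automatically primitive, as the resultant is a form of positive degree in the coefficients). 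Iterating clears $T$; taking a primitive $\Ocal_S$-lift $\Phi$ of the resulting point of $\PP^N(K)$ gives $\Res(\Phi)\in\Ocal^\times_S$, and the same construction for $\psi$ gives $\psi_0,\Psi$.

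For (b), since $[\psi]_K\in\Tw(\phi/K)$ and $\phi_0\in[\phi]_K$, the morphisms $\phi_0$ and $\psi_0$ are $\Kbar$-isomorphic, say $\psi_0=\phi_0^f$ with $f\in\PGL_{n+1}(\Kbar)$; fix a lift $A$. The key point — a rigidity property of good reduction — is that $f\in\PGL_{n+1}(\Ocal_w)$ for every place $w$ of $\Kbar$ lying over some $v\notin S$. To prove this, scale $A$ to a primitive integral matrix over $\Ocal_w$ and suppose $e:=w(\det A)>0$. Comparing the two tuples of degree-$d$ forms representing the morphism $\psi_0\circ f=f\circ\phi_0$ yields $\Psi\circ A=\mu\,(A\circ\Phi)$ for a unique $\mu\in\Kbar_w^\times$; taking resultants and using the transformation formulas $\Res(B\circ\Phi)=\det(B)^{d^n}\Res(\Phi)$, $\Res(\Phi\circ B)=\det(B)^{d^{n+1}}\Res(\Phi)$, scaling-homogeneity of $\Res$, and $\Res(\Phi),\Res(\Psi)\in\Ocal_w^\times$ (from part (a)), one computes $w(\mu)=(d-1)e/(n+1)>0$. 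Hence every coefficient of $\Psi\circ A$ lies in $\mfrak_w$, so $\Psi\circ A$ is not primitive. But $\Res(\Psi)$ is a $w$-adic unit, so the reduction of $\Psi$ modulo $w$ is a morphism of $\PP^n$ over the residue field, whence its coordinate forms have no common zero except the origin; since the reduction of the primitive matrix $A$ is a nonzero matrix, the composite reduces to a nonzero tuple of forms, i.e.\ $\Psi\circ A$ is primitive — a contradiction. Thus $e=0$ and $f\in\PGL_{n+1}(\Ocal_w)$ for all such $w$. Realizing $f$ over a number field and then enlarging it so that the fractional ideal measuring the content of a lift becomes principal, one finally rescales to get $A\in\GL_{n+1}(\overline{\Ocal}_S)$.

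For (c), let $f$ denote the automorphism attached to the matrix $A$ from part (b), so $\psi_0=\phi_0^f$. From $\psi_0^{\,k}\circ f=f\circ\phi_0^{\,k}$ for all $k$ it follows that $f$ carries $\phi_0$-orbits bijectively to $\psi_0$-orbits preserving their lengths, hence restricts to a bijection $\PrePer(\phi_0,M)\to\PrePer(\psi_0,M)$, which is the first assertion. For the last assertion put $\Vcal=\PrePer(\phi_0,M)$ and $\Wcal=f(\Vcal)=\PrePer(\psi_0,M)$: both are $\Gal(\Kbar/K)$-stable with exactly $N$ points (so conditions (1) and (2) of Definition~\ref{SIntSet} hold for each), and because $f\in\PGL_{n+1}(\overline{\Ocal}_S)$ is a linear isomorphism it takes linearly independent $(n+1)$-point subsets to linearly independent $(n+1)$-point subsets, so condition (3) holds for $\Vcal$ if and only if it holds for $\Wcal$; when it does, Lemma~\ref{DeltaInv} gives $D_{F_\Vcal}=D_{F_\Wcal}$, so condition (4) holds for one if and only if for the other. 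Hence $\Vcal\in\Pcal(S,N)$ if and only if $\Wcal\in\Pcal(S,N)$.

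I expect the main obstacle to be the rigidity statement inside (b): that a $\Kbar$-isomorphism between two morphisms both having good reduction at $w$ must be realized by an element of $\PGL_{n+1}(\Ocal_w)$. Its proof rests on the dynamical input that good reduction forces the coordinate forms of the reduced morphism to have no common zero — exactly what makes $\Psi\circ A$ primitive and contradicts the resultant computation. By comparison, the place-by-place patching in (a) and the final principalization in (b) are routine once $\Ocal_S$ is a PID.
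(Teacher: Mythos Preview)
Your argument is correct and follows the same three-step skeleton as the paper: obtain $\Ocal_S$-models with unit resultant for (a), show the conjugating element is $\overline{\Ocal}_S$-integral for (b), then invoke Lemma~\ref{DeltaInv} for (c). The difference is one of packaging rather than substance: the paper outsources (a) to the global minimal model theorem of Petsche--Stout \cite{PetscheStout2} and (b) to Lemma~6 of Petsche--Szpiro--Tepper \cite{PetscheSzpiroTepper}, while you unpack both. Your place-by-place patching in (a) is exactly the mechanism behind the cited minimal-model result over a PID, and your resultant computation in (b)---comparing $\Res(\Psi\circ A)$ with $\Res(\mu\,A\circ\Phi)$ via the transformation laws to force $w(\det A)=0$---is the same rigidity argument that underlies the cited lemma. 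So your proof is a self-contained version of the paper's, not a genuinely different route; what it buys is independence from the two external references, at the cost of the approximation details in (a) that you leave as a sketch.

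One small point to close in (b): after you rescale to obtain $A\in\GL_{n+1}(\overline{\Ocal}_S)$, the lifts $\Psi$ and $\Phi^A$ agree only up to a scalar $\lambda\in\Kbar^\times$, and comparing resultants shows $\lambda\in\overline{\Ocal}_S^\times$. To get the literal equality $\Psi=\Phi^A$ asserted in the statement, replace $A$ by $cA$ with $c^{d-1}=\lambda$; since $\Phi^{cA}=c^{1-d}\Phi^A$ and $c\in\overline{\Ocal}_S^\times$, this finishes the job.
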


\begin{proof}
Part (a) follows from the main theorem of \cite{PetscheStout2} regarding the existence of global minimal models over principal ideal domains. The existence of $A\in\GL_{n+1}(\Kbar)$ follows immediately from $\Phi,\Psi$ being lifts of twists.  That $A\in\GL_{n+1}(\overline{\Ocal}_S)$ follows from Lemma 6 in \cite{PetscheSzpiroTepper}. For (c), that $\PrePer(\phi_0,M)=f(\PrePer(\psi_0,M))$ is immediate.  As $A\in\GL_{n+1}(\overline{\Ocal}_S)$ it follows that $f\in\PGL_{n+1}(\overline{\Ocal}_S)$, and it immediately follows from Lemma \ref{DeltaInv} that $\PrePer(\phi_0,M)\in\Pcal(S,N)$ if and only if the set $\PrePer(\psi_0,M)\in\Pcal(S,N)$.
\end{proof}

We are now ready to prove the main theorem.  

\begin{thm}\label{MainTheorem}
Let $\phi:\PP^n\rightarrow\PP^n$ be a rational morphism of degree $d>1$ defined over $K$ and $\Tw(\phi/K)$ the set of $K$-twists.  Then there are only finitely many twists $[\psi]_K\in\Tw(\phi/K)$ which have good reduction at all places $v\not\in S$.
\end{thm}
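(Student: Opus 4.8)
The plan is to argue by contradiction, following the sketch outlined in the introduction. Suppose $\Vcal(S)$ is infinite. After possibly enlarging $S$ we may assume $\Ocal_S$ is a PID without changing the problem (enlarging $S$ only enlarges $\Vcal(S)$, so finiteness for the larger set implies finiteness for the original). Pick representatives $\psi_i$, $i=1,2,\ldots$, of infinitely many distinct classes in $\Vcal(S)$. Fix $M$ large enough that $N = |\PrePer(\phi,M)| \geq n+2$ and that $\PrePer(\phi,M)$ contains an $(n+2)$-point subset $\Vcal_0$ in general position; this is possible by the height-theoretic finiteness of $\PrePer(\phi,M)$ together with Fakhruddin's Zariski density theorem, exactly as set up at the start of Section 4.

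Next I would apply Proposition \ref{PrePerinPcal} to each pair $(\phi,\psi_i)$. This produces $\phi_{0,i} \in [\phi]_K$, $\psi_{0,i}\in[\psi_i]_K$, homogeneous lifts with $\Ocal_S$-coefficients and unit resultant, and an $A_i \in \GL_{n+1}(\overline{\Ocal}_S)$ with $\Psi_i = \Phi_i^{A_i}$, such that the associated $f_i \in \PGL_{n+1}(\overline{\Ocal}_S)$ satisfies $\PrePer(\psi_{0,i},M) = f_i(\PrePer(\phi_{0,i},M))$, and $\PrePer(\phi_{0,i},M) \in \Pcal(S,N) \iff \PrePer(\psi_{0,i},M)\in\Pcal(S,N)$. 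A subtlety: $\phi_{0,i}$ is a $K$-conjugate of $\phi$ that depends on $i$ (it is the global minimal model attached to $\psi_i$), so $\PrePer(\phi_{0,i},M)$ is a $\PGL_{n+1}(K)$-translate of $\PrePer(\phi,M)$. I would fix one such model $\phi_0$ arising for some index and absorb the $K$-change-of-coordinates; since a $\PGL_{n+1}(K)$-translate of the fixed general-position subset $\Vcal_0$ is again in general position and has the same cardinality, I may as well rename and assume all $\phi_{0,i}$ equal a single $\phi_0$ whose $\PrePer(\phi_0,M)$ contains an $(n+2)$-point general-position subset. The key claim is that $\PrePer(\phi_0,M) \in \Pcal(S,N)$: conditions (1) (cardinality $N$), (2) ($\Gal(\Kbar/K)$-stability of a preperiodic-point set of a $K$-morphism), and (3) (existence of a linearly independent, indeed general-position, $(n+1)$-subset) are immediate; condition (4), that $D_{F_{\PrePer(\phi_0,M)}} = \Ocal_S$, is the place where good reduction of $\phi_0$ outside $S$ must be used. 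I expect this to be the main obstacle: one must show that at every $v \notin S$, reduction preserves linear independence among the relevant tuples of preperiodic points, which should follow from $\phi_0$ having a model over $\Ocal_v$ with unit resultant so that preperiodic points reduce to distinct preperiodic points of $\tilde\phi_{0,v}$ and no degeneration of the discriminant occurs; this is the analogue for $\PP^n$ of the classical fact that good reduction forces pairwise $S$-integrality of periodic points on $\PP^1$.

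Granting that $\PrePer(\phi_0,M) \in \Pcal(S,N)$, it follows that each $\PrePer(\psi_{0,i},M) = f_i(\PrePer(\phi_0,M))$ also lies in $\Pcal(S,N)$, and since $f_i \in \PGL_{n+1}(\overline{\Ocal}_S)$ but need not lie in $\PGL_{n+1}(\Ocal_S)$, I would instead observe that $\PrePer(\psi_{0,i},M) \in \Pcal(S,N)$ directly from Proposition \ref{PrePerinPcal}(c) (which only needs $\Pcal$-membership of the $\phi_0$-side). Now apply Theorem \ref{FiniteOrbit}: the sets $\PrePer(\psi_{0,i},M)$, all in $\Pcal(S,N)$, fall into finitely many $\PGL_{n+1}(\Ocal_S)$-orbits, so after passing to an infinite subsequence there is a single set $\Wcal = \PrePer(\psi_{0,i_1},M)$ and elements $g_i \in \PGL_{n+1}(\Ocal_S)$ with $g_i(\PrePer(\psi_{0,i},M)) = \Wcal$ for all $i$ in the subsequence. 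Composing, $h_i := g_i \circ f_i \in \PGL_{n+1}(\overline{\Ocal}_S)$ satisfies $h_i(\PrePer(\phi_0,M)) = \Wcal$. By Lemma \ref{FixGenPos}, applied with $\Vcal = \PrePer(\phi_0,M)$ (which contains an $(n+2)$-point general-position subset) and $\Wcal$, there are only finitely many such $h_i$; hence only finitely many pairs $(g_i,f_i)$ up to the finite ambiguity, and in particular only finitely many $f_i \in \PGL_{n+1}(\overline{\Ocal}_S)$ appear (the $g_i$ ranging over a finite set of orbit representatives times a fixed stabilizer). Finally, the twist class $[\psi_i]_K$ is determined by $\psi_{0,i} = \phi_0^{f_i}$ up to $K$-conjugacy, so finitely many $f_i$ yield finitely many classes $[\psi_i]_K$, contradicting the assumption that infinitely many were distinct. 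This contradiction proves $\Vcal(S)$ is finite, which is the assertion of Theorem \ref{MainTheorem}. The one bookkeeping point to handle with care is the passage from "finitely many $f_i$" back to "finitely many $[\psi_i]_K$": since $\psi_{0,i}$ is a specific representative of $[\psi_i]_K$ and it equals $\phi_0^{f_i}$, distinct classes force distinct $f_i$ modulo the finite group $\Aut(\phi_0) \subset \PGL_{n+1}(\Kbar)$ of automorphisms commuting with $\phi_0$, and this group is finite because $d \geq 2$, so the count goes through.
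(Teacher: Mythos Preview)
Your overall architecture matches the paper's, but there is a genuine gap at the step you yourself flag as ``the main obstacle'': condition~(4) of Definition~\ref{SIntSet}, namely $D_{F_{\PrePer(\phi_0,M)}}=\Ocal_S$, does \emph{not} follow from good reduction of $\phi_0$ outside $S$. Your proposed justification---that good reduction forces linearly independent $(n+1)$-tuples of preperiodic points to remain linearly independent upon reduction---is false already in simple examples. Take $\phi(X:Y:Z)=(X^2:Y^2:Z^2)$ on $\PP^2$ over $\QQ$, which has everywhere good reduction. The preperiodic points $(1:1:1),(1:-1:1),(1:1:-1)\in\PrePer(\phi,2)$ are linearly independent (their determinant is $4$), yet all three reduce to $(1:1:1)$ modulo $2$; thus the associated discriminant ideal is divisible by $2$ despite good reduction at $2$. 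So ``granting'' this step is not innocuous: the argument as written does not go through.

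The paper's proof avoids this entirely by a much simpler device you overlooked: since $\PrePer(\phi,M)$ is a \emph{fixed} finite set depending only on $\phi$ and $M$, its discriminant ideal has positive valuation at only finitely many places; one simply \emph{enlarges $S$} to contain those places (and then further enlarges so that $\Ocal_S$ is a PID). This costs nothing, because enlarging $S$ only weakens the good-reduction hypothesis on the $\psi_i$. After this enlargement $\PrePer(\phi,M)\in\Pcal(S,N)$ holds by fiat, and Proposition~\ref{PrePerinPcal}(c) transfers it to each $\PrePer(\psi_i,M)$ via the $f_i\in\PGL_{n+1}(\overline{\Ocal}_S)$. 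From that point your endgame is equivalent to the paper's (the paper conjugates each $\psi_i$ by $g_i\in\PGL_{n+1}(\Ocal_S)$ so that $\PrePer(\psi_i,M)=\PrePer(\phi,M)$ and then applies Lemma~\ref{FixGenPos} with $\Vcal=\Wcal$; your version with $h_i=g_i\circ f_i$ and possibly distinct $\Vcal,\Wcal$ works just as well). Incidentally, your careful discussion of the dependence of $\phi_{0,i}$ on $i$ and the final $\Aut(\phi_0)$ bookkeeping is more explicit than the paper's treatment, but the essential content is the same.
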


\begin{proof}
If no twists of $\phi$ have good reduction outside $S$ then there is nothing to prove.  Therefore, assume that at least one such twist exists, and since being twists is an equivalence relation, without loss of generality assume that $\phi$ has good reduction outside $S$. 

Assume contrary to the theorem that
\begin{equation}\label{TwistSeq}
[\psi_1]_K,[\psi_2]_K,[\psi_3]_K,\ldots
\end{equation}
is an infinite sequence of distinct twists which have good reduction outside S. 

We may assume that by Fakrhuddin's result on Zariski density of preperiodic points (see \cite{Fak}) that we can increase $M$ so that the set $\PrePer(\phi,M)$ contains a set of $n+2$ points of $\PP^n$ in general position.  Because $\phi$ is defined over $K$ the set $\PrePer(\phi,M)$ is $\Gal(\Kbar/K)$-stable.  There are only finitely many combinations of $n+1$ points of $\PrePer(\phi, M)$ which are linearly independent and let $D_0,\ldots D_t$ be their determinants.  There are only finitely many places $v$ of $K$ such that $v(D_i)>0$ for any $i=0,\ldots ,t$.  Enlarge $S$ by these places.  It follows that $D=\Ocal_S$ where $D$ is the discriminant associated to the set $\PrePer(\phi,M)$ and further enlarging $S$ does not change this condition. It follows that 
\begin{equation*}
\PrePer(\phi,M)\in\Pcal(S,N)
\end{equation*}
where $N=|\PrePer(\phi,M)|$. Finally, we may further enlarge $S$ until $\Ocal_S$ is a PID.  Consequently, by Proposition \ref{PrePerinPcal} we have that $\PrePer(\psi_i,M)\in\Pcal(S,N)$ for each $i=1,2,\ldots$, perhaps after replacing $\psi_i$ with some $K$-isomorphic morphism within $[\psi_i]_K$.

By Theorem \ref{FiniteOrbit}, we may assume, after passing to a subsequence, that $\PrePer(\phi, M), \PrePer(\psi_i,M)$ lie in the same $\PGL_{n+1}(\Ocal_S)$-equivalence class for all $i\geq 1$. It follows that there exists a sequence of linear transformations
\begin{equation*}
g_i\in\PGL_{n+1}(\Ocal_S)
\end{equation*}
such that 
\begin{equation*}
\PrePer(\psi_i,M)=g_i(\PrePer(\phi,M))
\end{equation*}
and therefore that
\begin{equation*}
\PrePer(\psi_i^{g_i},M)=g_i^{-1}(\PrePer(\psi,M))=\PrePer(\phi,M)
\end{equation*}
As $\psi_i^{g_i}$ also has good reduction at all $v\not\in S$, it suffices to replace $\psi_i$ with $\psi^{g_i}_i$ and assume that $\PrePer(\psi_i,M)=\PrePer(\phi,M)$ for all $i=1,2,\ldots$.

Let $f_i\in\PGL_{n+1}(\Kbar)$ be such that $\psi_i=\phi^{f_i}$. As the rational morphisms $\psi_i$ are assumed to be distinct, so must the $f_i$ be distinct, and it follows that $f_i$ gives a bijection
\begin{equation}\label{FiniteSetMap}
f_i:\PrePer(\phi,M)\rightarrow \PrePer(\phi,M)
\end{equation}
As $\PrePer(\phi,M)$ is finite and contains a subset of $n+2$ points in general position, it follows from Lemma \ref{FixGenPos} that only finitely many $f_i$ can exist and therefore gives the necessary contradiction.  
\end{proof}

\medskip

\bibliographystyle{acm}

\medskip

\end{document}